%
%
%
%

\documentclass{amsart}
\usepackage{amsmath, amssymb, amsthm, mathrsfs, setspace}
\usepackage[dvips]{color} 
\usepackage[dvips]{graphicx}

\newtheorem{theorem}{Theorem}[section]
\newtheorem{lemma}[theorem]{Lemma}
\newtheorem{corollary}[theorem]{Corollary}
\newtheorem{proposition}[theorem]{Proposition}
\newtheorem{definition}[theorem]{Definition}

\newtheorem*{acknowledge}{Aknowledgments}
\theoremstyle{remark}
\newtheorem{remark}[theorem]{Remark}

\newcommand{\del}{\partial}

\newcommand{\Z}{\mathbb{Z}}

\newcommand{\Mod}{{\rm{Mod}}}

\begin{document}

\title[Stein fillings of homology $3$-spheres and Mapping class groups]{Stein fillings of homology $3$-spheres and \\ Mapping class groups }

\author[Takahiro Oba]{Takahiro Oba}
\address{Department of Mathematics, Tokyo Institute of Technology, 2-12-1 Ookayama, Meguroku, Tokyo 152-8551, Japan}
\email{oba.t.ac@m.titech.ac.jp}

\begin{abstract}
In this article, using combinatorial techniques of mapping class groups, we show that 
a Stein fillable integral homology $3$-sphere supported by an open book decomposition with page a $4$-holed sphere 
admits a unique Stein filling up to diffeomorphism.
Furthermore, according to a property of deforming symplectic fillings of a rational homology $3$-spheres into strongly symplectic fillings, 
we also show that a symplectically fillable integral homology $3$-sphere 
supported by an open book decomposition with page a $4$-holed sphere 
admits a unique symplectic filling up to diffeomorphism and blow-up.
 
\end{abstract}

\subjclass[2010]{Primary 57R17; Secondary 57R65}

\date{\today}


\maketitle
  
   \section{Introduction.}
   A $2$-plane field $\xi$ on an oriented $3$-manifold $M$ is called an (\textit{oriented}) \textit{contact structure} on $M$ if 
   there exists a $1$-form $\alpha$ such that $\xi = {\rm{ker} \alpha}$ and $\alpha \wedge d \alpha>0$ with respect to 
   the orientation of $M$.
	The pair $(M, \xi)$ is called a \textit{contact manifold}.
   Recently, Giroux \cite{Gi} gave a one-to-one correspondence 
	between the set of contact structures on $M$ and 
	the set of open book decompositions of $M$.
   This correspondence enables us to handle contact $3$-manifolds combinatorially by surface topology and mapping class groups.
    
	A $2$-form $\omega$ on a $4$-manifold $X$ is a \textit{symplectic structure} on $X$ if 
	$\omega$ is closed and non-degenerate.
	The pair $(X, \omega)$ is called a \textit{symplectic manifold}.
	A (\textit{weak}) \textit{symplectic filling} of $(M, \xi)$ is 
	 a compact symplectic $4$-manifold $(X, \omega)$ such that $\del W = M $ as oriented manifolds and  $\omega | \xi > 0$.
	 A \textit{strongly symplectic filling} of $(M, \xi)$ is a compact symplectic $4$-manifold $(X, \omega)$ 
	 such that $\omega$ is exact near $\del X = M$ and a $1$-form $\alpha$ with $d \alpha = \omega$ 
	can be chosen so that ${\rm{ker}} (\alpha| \del X) = \xi$.
	A \textit{Stein filling} of $(M, \xi)$ is a complex surface $(X, J)$ 
	with a proper plurisubharmonic function $\Phi : X \rightarrow [0,\infty)$ such that 
	for a regular value $t$ the 3-manifold $M_{t} = \Phi ^{-1}(t)$ with the 2-plane field of complex tangencies $TM_{t}\cap JTM_{t}$ is 
	contactmorphic to $(M, \xi)$.
	A contact structure $\xi$ on $M$ is said to be (\textit{weakly}) \textit{symplectically fillable}, 
	\textit{strongly symplectically fillable}, and \textit{Stein fillable}  
	if $(M, \xi)$ admits a symplectic filling, strongly symplectic filling, and Stein filling respectively.
	It is a well-known fact after Eliashberg and Gromov \cite{EG} that a fillable contact structure on a $3$-manifold is tight.
	Thus various fillings play important roles in examining topology of $(M, \xi)$.
	
	Classification of these fillings of a given contact $3$-manifold has been 
	discussed as a crucial problem in contact and symplectic geometry. 
	In particular, it is important to determine which contact $3$-manifold admits a unique filling.
	Eliashberg \cite{El} showed that the $3$-sphere $S^3$ with the standard contact structure $\xi_{std}$
	admits a unique symplectic filling up to symplectic deformation and blow-up. 
	McDuff \cite{Mc}  showed that 
	the lens space $L(p,1)$ for $p\neq 4$ with the standard contact structure admits
	a unique symplectic filling up to diffeomorphism and blow-up, 
	and Hind \cite{Hi} showed that it admits a unique Stein filling up to Stein deformation.
	Furthermore, 
	Plamenevskaya and Van Horn-Morris \cite{PV} showed that 
	every tight contact structure on $L(p,1)$ for $p\neq 4$ admits a unique symplectic filling up to symplectic deformation and blow-up.
	Lisca \cite{L} and Kaloti \cite{K}  showed that there are infinitely many tight lens spaces different from $L(p,1)$ 
	each of which admits a unique symplectic filling up to diffeomorphism and blow-up. 
	On other $3$-manifolds, 
	Ohta and Ono \cite{OO2} showed that the links of simple singularities with the standard contact structures 
	admit unique symplectic fillings up to symplectic deformation and blow-up.
	Stipsicz \cite{S} showed that the Poincar\'{e} homology $3$-sphere with a Stein fillable contact structure 
	and the $3$-torus $T^3$ with the unique Stein fillable contact structure 
	admit unique Stein fillings up to homeomorphism.
	Wendl \cite{W} showed that the same contact $T^3$ admits a unique strongly filling up to symplectic deformation and blow-up.
	Sch\"{o}nenberger \cite{Sch}  constructed an infinite family of contact small Seifert $3$-manifolds 
	each of which admits a unique symplectic filling up to diffeomorphism and blow-up.
	Kaloti and Li \cite{KL} constructed one of contact $3$-manifolds including infinitely many hyperbolic $3$-manifolds 
	such that each member admits a unique Stein filling up to symplectic deformation.
	
	In this article, we will give a new family of contact $3$-manifolds each of which admits 
	a unique Stein filling up to diffeomorphism.
	This family has a different flavor from the above earlier results.
	We emphasize that our uniqueness result hold for 
	all Stein fillable integral homology $3$-spheres 
	supported by open book decompositions with page a $4$-holed sphere $\Sigma_{0,4}$.
	
	 \begin{theorem}\label{thm: uniqueness}
	 Let $M$ be an integral homology $3$-sphere.
	Suppose that a contact structure $\xi$ on $M$ is Stein fillable and 
	supported by an open book decomposition with page $\Sigma_{0,4}$.
	Then the contact $3$-manifold $(M,\xi)$ admits a unique Stein filling up to diffeomorphism.
	Furthermore this Stein filling is diffeomorphic to either the $4$-disk $D^4$ or a Mazur type manifold.
   \end{theorem}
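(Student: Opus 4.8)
The plan is to reduce the statement to a question about positive Dehn twist factorizations in the mapping class group $\Mod(\Sigma_{0,4},\partial\Sigma_{0,4})$ and to resolve that question by a case analysis that uses the special structure of this group. Since $\Sigma_{0,4}$ is planar, Wendl's theorem \cite{W} applies to the given open book: every strong symplectic filling of $(M,\xi)$ is, up to symplectic deformation and blow-up, the total space of an allowable Lefschetz fibration over $D^2$ with regular fiber $\Sigma_{0,4}$ whose boundary open book is the given one. A Stein filling is minimal, so no blow-up is needed; hence every Stein filling $W$ of $(M,\xi)$ arises this way, and equivalently the monodromy $\phi$ admits a factorization $\phi=\tau_{c_1}\cdots\tau_{c_n}$ into right-handed Dehn twists about essential simple closed curves $c_i\subset\Sigma_{0,4}$, with $W$ obtained from $\Sigma_{0,4}\times D^2$ by attaching $2$-handles along the $c_i$ with framing one less than the page framing. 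Because the diffeomorphism type of $W$ is determined by this factorization up to Hurwitz moves and simultaneous conjugation, it suffices to (i) classify all positive factorizations of $\phi$ up to Hurwitz moves and conjugation, and (ii) identify the associated $4$-manifold.

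For the combinatorial heart I would use that every essential curve on $\Sigma_{0,4}$ is either parallel to one of the four boundary circles $\delta_1,\dots,\delta_4$ or separates the holes into two pairs, the latter occurring in exactly three topological types $a,b,c$ related by the lantern relation $\tau_a\tau_b\tau_c=\tau_{\delta_1}\tau_{\delta_2}\tau_{\delta_3}\tau_{\delta_4}$, together with the fact that $\Mod(\Sigma_{0,4},\partial\Sigma_{0,4})$ is a central extension $1\to\Z^4\to\Mod(\Sigma_{0,4},\partial\Sigma_{0,4})\to F_2\to 1$ of the free group $F_2\cong\mathrm{PMod}(S_{0,4})$ by the boundary twists. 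First I would extract the constraints forced by $M=\partial W$ being an integral homology $3$-sphere: the class $[c_i]\in H_1(\Sigma_{0,4};\Z)\cong\Z^3$ depends only on the type of $c_i$; the classes $\{[c_i]\}$ must generate $\Z^3$, which forces at least one $c_i$ to be boundary-parallel, since the classes of $2+2$-type curves span only an index-two sublattice; and the intersection form of $W$ on $H_2(W)=\ker(\Z^n\to H_1(\Sigma_{0,4}))$ must be unimodular. Since a planar Lefschetz fibration over $D^2$ has $b_2^+=0$ and every class in $H_2(W)$ is represented by a sphere, minimality of the Stein filling $W$ rules out any such sphere of square $-1$; together with an analysis of which definite unimodular lattices can occur, this forces $n=3$, so that $H_2(W)=0$ and $W$ is an integral homology ball. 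It then remains to show that all length-$3$ positive factorizations of a fixed such $\phi$ are Hurwitz/conjugation equivalent: using the braid and lantern relations together with the action of $F_2$ on isotopy classes of curves, one reduces the a priori infinitely many candidate configurations of $\{c_1,c_2,c_3\}$ to a short explicit list. This reduction — controlling the infinitely many isotopy classes of separating curves that could occur — is the step I expect to be the main obstacle.

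Finally, for each configuration on the list the manifold $W$ is built from $\Sigma_{0,4}\times D^2$ (one $0$-handle and three $1$-handles) together with three $2$-handles, so $\pi_1(W)$ has the balanced presentation $\langle x_1,x_2,x_3\mid w_1,w_2,w_3\rangle$, where $w_i$ is the word in $\pi_1(\Sigma_{0,4})=\langle x_1,x_2,x_3\rangle$ represented by $c_i$; since $H_1(W)=0$ this presents a perfect group, and in each case it simplifies to the trivial group, so $W$ is contractible. A contractible Lefschetz fibration of this form is recognized by Kirby calculus either as $D^4$, which happens precisely when $\partial W=S^3$, or, after cancelling handle pairs, as a $4$-manifold built from one $1$-handle and one $2$-handle, that is, a Mazur type manifold. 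Combining this with the classification of factorizations yields both the uniqueness of the Stein filling up to diffeomorphism and the stated dichotomy.
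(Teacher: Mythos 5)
Your setup matches the paper's: Wendl's theorem reduces the problem to classifying positive factorizations of the monodromy up to Hurwitz moves and global conjugation; homological constraints force at least one vanishing cycle to be boundary-parallel and the number of singular fibers to be $3$; and the resulting handlebody is contractible, hence $D^4$ or of Mazur type. Two points, however, are genuine gaps rather than differences of route.

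First, your derivation of $n=3$ leans on the unjustified claim that every class in $H_2(W)$ is represented by an embedded sphere; this is not obvious for a general planar PALF, and you would need it to invoke minimality (or the adjunction inequality) against a $(-1)$-class. The paper gets $H_2(X;\Z)=0$ differently and rigorously: by Etnyre's theorem $Q_X$ is negative definite and diagonalizable when $\partial X$ is a homology sphere, so a nonzero $H_2$ produces a class $a$ with $a^2=-1$, whence $c_1(X)(a)\equiv w_2(X)(a)\equiv a^2\equiv 1\pmod 2$ and $c_1(X)\neq 0$; since $e(\xi)=0$, this contradicts the Ozsv\'ath--Stipsicz--Szab\'o obstruction to planar support. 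Then $\chi(X)=1$ forces the numbers of $1$- and $2$-handles to agree, i.e.\ exactly three singular fibers.

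Second, and more seriously, the core of the theorem --- that all length-$3$ positive factorizations of $\varphi=t_{\gamma}t_{\beta}t_{\alpha}$ are Hurwitz/conjugation equivalent --- is exactly the step you defer as ``the main obstacle,'' and the tools you name (braid and lantern relations, the $F_2$-action on curves) are not developed into an argument; the lantern relation in particular trades a length-$3$ word for a length-$4$ one and so cannot by itself compare two length-$3$ factorizations. The paper's resolution is concrete: the abelianization of $\Mod(\Sigma_{0,4})\cong\Z^4\times F_2$ forces one factor to equal $t_{\alpha}$ and the other two to be conjugates of $t_{\beta}$ and $t_{\gamma}$ by elements $\psi_1,\psi_2\in\langle t_{\beta},t_{\gamma}\rangle$; since $i(\beta,\gamma)\ge 2$ and $\{\beta,\gamma\}$ fills the surface, Ishida's theorem gives $\langle t_{\beta},t_{\gamma}\rangle\cong F_2$ with a faithful representation $\varrho$ into ${\rm PSL}(2;\Z)$, and comparing the traces of $\varrho(t_{\gamma}t_{\beta})$ and $\varrho(t_{\psi_1^{-1}\psi_2(\gamma)}t_{\beta})$ pins down $\psi_1^{-1}\psi_2=t_{\beta}^{\pm n}t_{\gamma}^{\mp m}$, which Hurwitz moves and a global conjugation then absorb. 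Without an argument of this kind your proposal does not establish uniqueness, which is the substance of the theorem.
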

   
   Here a $4$-manifold $X$ is called of \textit{Mazur type} if 
	it is contractible, the boundary $\del X$ is not diffeomorphic to $S^3$, and 
	it admits a handle decomposition consisting of one $0$-handle, one $1$-handle, and one $2$-handle.
	
	\begin{remark}
	There exist infinitely many integral homology $3$-spheres satisfying the assumption of Theorem \ref{thm: uniqueness}.
	In fact, the author constructed such a family in \cite{O}.
   \end{remark}
   
	The earlier uniqueness results except the ones of Plamenevskaya and Van Horn-Morris, Kaloti and Li, and Kaloti  
	were shown by techniques of symplectic geometry and gauge theory.
	The three results and ours, however, are shown by more combinatorial techniques of mapping class groups.
	These are based on the following work of Wendl \cite{W}.
   
   \begin{theorem}[Wendl {\cite[Theorem $1$]{W}}]\label{thm: Wendl}
   Suppose that $(X, \omega)$ is a strongly symplectic filling of a contact $3$-manifold $(M,\xi)$ 
	supported by a planar open book decomposition $(\Sigma, \varphi)$.
	Then $(W, \omega)$ is symplectic deformation equivalent to 
	a blow-up of the total space of a positive allowable Lefschetz fibration 
	obtained from $(\Sigma, \varphi)$.
	\end{theorem}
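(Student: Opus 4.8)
This is a theorem of Wendl, proved by the method of $J$-holomorphic foliations, and the strategy below follows that approach; write $b$ for the number of boundary components of the planar page $\Sigma$, so that $\Sigma$ is a sphere with $b$ holes. First I would complete the filling and adapt the geometry at infinity. Using Giroux's correspondence, choose a contact form $\lambda$ for $\xi$ \emph{adapted} to the open book $(\Sigma,\varphi)$: the binding is a union of $b$ closed Reeb orbits, which one arranges to be non-degenerate; the pages are positively transverse to the Reeb field; and $d\lambda$ restricts to an area form on the interior of each page. After a deformation of $\omega$ supported near $\del X$ (available since the filling is strong), glue on the positive cylindrical end $([0,\infty)\times M,\, d(e^t\lambda))$ to obtain the completion $(\widehat X,\widehat\omega)$, and pick a compatible almost complex structure $J$ on $\widehat X$ that is cylindrical and $\lambda$-adapted on the end, so $J\del_t=R_\lambda$ and $J\xi=\xi$.

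Next I would study the moduli space of \emph{holomorphic pages}. In the cylindrical end the pages of the open book, suitably reparametrized, are $J$-holomorphic: genus-zero surfaces with $b$ punctures, each puncture positively asymptotic with multiplicity one to a binding orbit. Let $\mathcal{M}$ be the connected component of the moduli space of such punctured $J$-holomorphic spheres containing these model curves. The analytic heart is that every curve in $\mathcal{M}$ is embedded and of genus zero, so by the automatic transversality theorem for punctured curves in dimension four, $J$ is automatically regular and $\mathcal{M}$ is a smooth surface; furthermore, positivity of intersections together with the adjunction formula for punctured curves show that distinct curves in $\mathcal{M}$ are disjoint away from finitely many common points, so $\mathcal{M}$ foliates an open part of $\widehat X$.

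Then I would compactify and extract the fibration. By SFT/Gromov compactness, $\overline{\mathcal{M}}$ is compact; a degenerating sequence of curves can limit to (i) a nodal curve in $\widehat X$ — two embedded genus-zero pieces meeting at a single node — contributing a singular fibre, (ii) an embedded $J$-holomorphic sphere of self-intersection $-1$ bubbling off, corresponding to a blow-up, or (iii) a holomorphic building with components in the symplectization; the asymptotic and intersection analysis near the non-degenerate binding orbits rules out the unwanted possibilities in (iii) and keeps the curves anchored to the binding. The ends of $\mathcal{M}$ are parametrized by the $S^1$-factor of the open book, so $\overline{\mathcal{M}}$ is a closed disk carrying finitely many interior marked points (the nodal fibres); the induced map $\widehat X\to\overline{\mathcal{M}}\cong D^2$, after blowing down the exceptional spheres from (ii), is a positive allowable Lefschetz fibration with regular fibre $\Sigma$, with singular fibres exactly at the nodes, and with induced boundary open book $(\Sigma,\varphi)$; its vanishing cycles are the vanishing circles of the nodal degenerations, and positivity of the critical points is forced by positivity of intersections of $J$-holomorphic curves. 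A parametrized Moser argument then upgrades the resulting diffeomorphism type of $X$ to the asserted symplectic deformation equivalence.

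The step I expect to be the main obstacle is the compactness analysis: one must carry out the asymptotic intersection theory of punctured holomorphic curves at the binding orbits to exclude the bad degenerations in the symplectization, and must identify the nodal limits with \emph{positively} oriented Lefschetz critical points while recognizing exceptional-sphere bubbling as blow-ups — in other words, showing that $\overline{\mathcal{M}}$ is not merely compact but is genuinely the base of a Lefschetz fibration. Transversality, by contrast, is essentially free here because the fibres are planar.
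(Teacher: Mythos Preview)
The paper does not prove this statement at all: Theorem~\ref{thm: Wendl} is quoted verbatim from Wendl's paper \cite{W} as an external input, and the author uses it as a black box (together with Proposition~\ref{prop: PALF}) to reduce the classification of Stein fillings to the combinatorics of positive factorizations of the monodromy in $\Mod(\Sigma_{0,4})$. So there is no ``paper's own proof'' to compare your proposal against.

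That said, your sketch is a faithful outline of Wendl's actual argument in \cite{W}: complete the strong filling with a cylindrical end, choose an adapted $J$, observe that the pages of the planar open book lift to finite-energy punctured $J$-holomorphic spheres, use automatic transversality (which holds precisely because the fibres have genus zero) and positivity of intersections to get a foliation of the completion, and then run SFT compactness to show that the only degenerations are single-node curves (Lefschetz singular fibres) and $(-1)$-sphere bubbles (blow-ups), with the symplectization levels ruled out by the asymptotic/intersection analysis at the binding orbits. Your identification of the compactness step as the crux is accurate. If you intend to include this as a sketch, it is fine as such, but be aware that in the context of the present paper it is entirely superfluous --- the author is simply citing the result.
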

	
	Note that a Stein filling $(X, J)$ of a contact $3$-manifold $(M, \xi)$ is minimal, and thus by the theorem, 
	$X$ is diffeomorphic to the total space of a positive allowable Lefschetz fibration obtained from 
	a given open book decomposition.
	Hence to classify Stein fillings of $(M, \xi)$ we examine the \textit{positive factorizations} of 
	the monodromy of the open book decomposition.
	
	Furthermore, Ohta and Ono \cite{OO} showed that if $M$ is a rational homology $3$-sphere, 
	then a symplectic filling of a contact $3$-manifold $(M, \xi)$ can be deformed its strongly symplectic filling.
	The next corollary follows from this fact immediately.

      \begin{corollary}\label{cor}
	 Let $M$ be an integral homology $3$-sphere.
	Suppose that a contact structure $\xi$ on $M$ is symplectically fillable and 
	supported by an open book decomposition with page $\Sigma_{0,4}$.
	Then the contact $3$-manifold $(M,\xi)$ admits a unique symplectic filling up to diffeomorphism and blow-up.
   \end{corollary}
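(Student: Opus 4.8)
The plan is to deduce the corollary from Theorem~\ref{thm: uniqueness} by reducing an arbitrary weak symplectic filling, first to a strongly symplectic one and then---through Wendl's theorem---to a blow-up of a Stein filling. So let $(X,\omega)$ be a weak symplectic filling of $(M,\xi)$. Since $M$ is an integral homology $3$-sphere it is in particular a rational homology $3$-sphere, so the result of Ohta and Ono~\cite{OO} applies: one may modify $\omega$ in a collar of $\del X = M$, without changing the underlying smooth manifold $X$, so as to obtain a symplectic form $\omega'$ for which $(X,\omega')$ is a strongly symplectic filling of $(M,\xi)$. Hence no generality is lost in assuming the given filling to be strong.

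Next I would invoke Wendl's Theorem~\ref{thm: Wendl}. As $\Sigma_{0,4}$ is planar, the open book decomposition supporting $\xi$ is planar, so $(X,\omega')$ is symplectic deformation equivalent to a blow-up of the total space $W$ of a positive allowable Lefschetz fibration obtained from that open book. In particular $X$ is diffeomorphic to a blow-up of $W$, i.e.\ to the connected sum of $W$ with finitely many copies of $\overline{\mathbb{CP}}^2$. On the other hand, the total space of a positive allowable Lefschetz fibration carries a Stein structure whose contact boundary is precisely the contact $3$-manifold supported by the induced boundary open book, namely $(M,\xi)$; thus $W$ is a Stein filling of $(M,\xi)$.

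Theorem~\ref{thm: uniqueness} then finishes the proof: it provides a Stein filling $W_0$ of $(M,\xi)$, unique up to diffeomorphism and diffeomorphic to $D^4$ or to a Mazur type manifold, so that $W$ is diffeomorphic to $W_0$ and therefore $X$ is diffeomorphic to a blow-up of $W_0$. Since $(X,\omega)$ was arbitrary---and a weak filling need not be minimal, which is exactly why a blow-up genuinely occurs---this shows $(M,\xi)$ has a unique symplectic filling up to diffeomorphism and blow-up. I expect essentially all the difficulty to lie in Theorem~\ref{thm: uniqueness}, whose proof must classify the positive factorizations of the monodromy of an open book with page $\Sigma_{0,4}$ and integral homology sphere boundary; the corollary is a formal consequence once that classification, Wendl's theorem, and the Ohta--Ono deformation are available. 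The only points that need a little attention are checking that the Ohta--Ono procedure leaves the diffeomorphism type of $X$ unchanged, and confirming that the Lefschetz fibration produced by Wendl's theorem really is one ``obtained from'' the given open book, so that it falls within the scope of Theorem~\ref{thm: uniqueness} rather than being an \emph{a priori} unrelated Stein domain with boundary $(M,\xi)$.
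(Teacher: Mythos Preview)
Your proposal is correct and follows exactly the approach the paper intends: the paper's own proof is a single sentence stating that the corollary follows immediately from Ohta--Ono~\cite{OO}, leaving the reduction via Wendl's theorem to Theorem~\ref{thm: uniqueness} implicit. You have simply spelled out those implicit steps (weak $\Rightarrow$ strong via Ohta--Ono, strong $\Rightarrow$ blow-up of a PALF via Wendl, PALF $\Rightarrow$ Stein filling classified by Theorem~\ref{thm: uniqueness}), so there is nothing to correct.
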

   
   This article is constructed as follows: 
   In Section $2$, we review some definitions and properties of mapping class groups, positive Lefschetz fibrations and 
	open book decompositions.
	In Section $3$, 
	first, we examine a handle decomposition of a Stein filling of an integral homology $3$-sphere 
	using some properties of planar open book decompositions.
	Next, according to this observation, we prove several propositions about the number of handles of the given handle decomposition.
	Finally, we prove the main results using these propositions, Kirby calculus and mapping class groups.
	
    Throughout this article we will work in the smooth category.
    We assume that the reader is familiar with a few basics of contact geometry, Kirby diagrams and Kirby calculus.
    If necessary, we refer the reader to \cite{E}, \cite{Ge}, and \cite{OS} about the former 
	and to \cite[Section $4$ and $5$]{GS} about the latter.

    \begin{acknowledge}
    \rm{The author would like to express his gratitude to Professor Hisaaki Endo for his encouragement and many helpful comments for the draft of this article. 
    He would also like to thank Kouichi Yasui for his kindly comments on results in this article.}
    \end{acknowledge}

   \section{Preliminaries.}
   
   \subsection{Mapping class groups.}\label{MCG}
   
	Let $\Sigma$ be a compact, oriented, connected surface with boundary.
	
	\begin{definition}
	The \textit{mapping class group} ${\rm{Mod}}(\Sigma)$ of $\Sigma$ is the group of isotopy classes of 
	orientation-preserving diffeomorphisms of $\Sigma$ relative to the boundary $\del \Sigma$.
	\end{definition}
	
	Let $\alpha$ be a simple closed curve in $\Sigma$. 
	A \textit{right-handed Dehn twist} $t_{\alpha}: \Sigma \rightarrow \Sigma$ along 
	$\alpha$ in $\Sigma$ is a diffeomorphism obtained by cutting $\Sigma$ 
	along $\alpha$, twisting $360^{\circ}$ to the right and regluing.
	The mapping class of $t_{\alpha}$ is also denoted by $t_{\alpha}$ and also called the right-handed Dehn twist along $\alpha$.

	Let $\alpha$ and $\beta$ be simple closed curves in $\Sigma$.
	The \textit{geometric intersection number} between $\alpha$ and $\beta$ is denoted by $i(\alpha, \beta)$.
	We say that the collection $\{\alpha, \beta \}$ \textit{fills} $\Sigma$
	if, for any simple closed curve $\gamma$ in $\Sigma$, it holds that $i(\gamma, \alpha)>0$ or $i(\gamma, \beta)>0$.
	The subgroups of $\Mod (\Sigma)$ generated by two right-handed Dehn twists are 
	classified by the geometric intersection number between the simple closed curves generating these Dehn twists 
	and isotopy classes of these curves. 
	In particular, Ishida \cite[Theorem $1.2$]{I} showed that if $i(\alpha, \beta) \geq 2$, 
	the subgroup $\langle t_{\alpha}, t_{\beta} \rangle$  of $\Mod (\Sigma)$ generated by $t_{\alpha}$ and $t_{\beta}$ 
	is isomorphic to the free group $F_{2}$ of rank $2$.
	If $\{\alpha, \beta \}$ fills $\Sigma$, 
	we define the representation $\varrho: \langle t_{\alpha}, t_{\beta} \rangle \rightarrow PSL(2; \Z)$ as 
	\[
		t_{\alpha} \mapsto 
		\begin{pmatrix}
			1 & -i(\alpha, \beta) \\
			0 & 1 
		\end{pmatrix},\ 
		t_{\beta} \mapsto 
		\begin{pmatrix}
			1 & 0 \\
			i(\alpha, \beta) & 1 
		\end{pmatrix}.
	\]
	Note that if $i(\alpha, \beta ) \geq 2$, the representation $\varrho$ is injective.
	More precisely about the representation, we refer to the reader to \cite[Section $14.1.2$]{FM}.
   
   \subsection{Positive Lefschetz fibrations and open book decompositions.}\label{section: LF and OB}
   Let $X$ and $B$ be compact, oriented, connected smooth $4$- and $2$-manifolds.
   
   \begin{definition}\label{def: LF}
	A smooth map $f:X\rightarrow B$ is called a \textit{positive Lefschetz fibration} if there exist points
	$b_{1}, b_{2}, \dots , b_{m}$ in the interior of $B$ such that
	\begin{enumerate}
		\item $f|f^{-1}(B-\{b_{1}, b_{2}, \dots , b_{m}\}):f^{-1}(B-\{b_{1}, b_{2}, \dots , b_{m}\}) \rightarrow B-\{b_{1}, b_{2}, \dots , b_{m}\}$
				is a smooth fiber bundle over $B-\{b_{1}, b_{2}, \dots , b_{m}\}$ with fiber an oriented surface $\Sigma$,
		\item $\{ b_{1}, b_{2}, \dots , b_{m} \}$ are the critical values of $f$, with $p_{i} \in f^{-1}( b_{i})$ a unique critical point of $f$ for each $i$, 
		\item for each $b_{i}$ and $p_{i}$, there are local complex coordinate charts with respect to the orientations of $X$ 
		and $B$ such that locally f can be written as $f(z_{1},z_{2}) = z_{1}^2 + z_{2}^2$, and
		\item no fiber contains a $(-1)$-sphere, that is, an embedded sphere with self-intersection number $-1$.
	\end{enumerate}
   \end{definition}
   
	We call a fiber $f^{-1}(b)$ a \textit{singular fiber} if $b\in \{b_{1}, b_{2}, \dots , b_{m}\}$ and a \textit{regular fiber} otherwise. 
	Also we call $X$ the \textit{total space} and $B$ the \textit{base space}.
	
	We will review roughly a handle decomposition of the total space and 
	a monodromy of a given positive Lefschetz fibration over the disk $D^2$. 
	For more details we refer the reader to \cite[Section $8. 2$]{GS} and \cite[Section $10. 1$]{OS}.
	Suppose $f:X\rightarrow D^2$ is a positive Lefschetz fibration with fiber 
	a compact, oriented, connected genus $g$ surface $\Sigma$ with $r$ boundary components. 
	Then $X$ admits a handle decomposition 
	\[
		X  = (D^2 \times \Sigma) \cup (\cup_{i=1}^{m}h^{(2)}_{i})
		    =   (h^{(0)} \cup (\cup_{j=1}^{2g+r-1} h^{(1)}_{j})) \cup (\cup_{i=1}^{m}h^{(2)}_{i})  
	\]
	where each $h^{(k)}_{l}$ is a $k$-handle, and each $2$-handle $h_{i}^{(2)}$ corresponding to the critical point $p_{i}$ is 
	attached along a simple closed curve $\alpha_{i}$ in $\{ pt. \} \times \Sigma \subset D^2 \times \Sigma$ with 
	framing $-1$ relative to the product framing on $\alpha_{i}$. 
	The attaching circle $\alpha_{i}$ of $h^{(2)}_{i}$ is called a \textit{vanishing cycle} for the singular fiber $f^{-1}(b_{i})$. 
	Then the $m$-tuple $( t_{\alpha_{1}}, \,  t_{\alpha_{2}} ,\cdots, t_{\alpha_{m}})$ of $m$ right-handed Dehn twists 
	is called a \textit{monodromy } of $f$. 
	The product $t_{\alpha_{m}}  t_{\alpha_{m-1}} \cdots t_{\alpha_{2}} t_{\alpha_{1}}$ of Dehn twists is called the \textit{total monodromy}.
	Conversely for a given element $\varphi$ in $\Mod (\Sigma)$, a factorization of $\varphi$ into right-handed Dehn twists is called 
	a \textit{positive factorization} of $\varphi$.
	For the monodromy $( t_{\alpha_{1}}, \,  t_{\alpha_{2}} ,\cdots, t_{\alpha_{m-1}}, \, t_{\alpha_{m}})$, we will define two transformations.
	A \textit{Hurwitz move} is defined as the partial replacement of 
	$( t_{\alpha_{1}},  t_{\alpha_{2}} ,\cdots, t_{\alpha_{i}}, t_{\alpha_{i+1}}, \cdots, t_{\alpha_{m}})$ by 
	\[
	( t_{\alpha_{1}},  t_{\alpha_{2}} ,\cdots, t_{\alpha_{i+1}}, t_{\alpha_{i+1}} t_{\alpha_{i}} t_{\alpha_{i+1}}^{-1}, \cdots, t_{\alpha_{m}}).
	\]
	A \textit{total conjugation} by an element $\psi$ of $\Mod (\Sigma)$ is defined as taking the conjugate of each $t_{\alpha_{i}}$
	by $\psi$ simultaneously so that 
	\[
	( \psi t_{\alpha_{1}} \psi^{-1},  \psi  t_{\alpha_{2}} \psi^{-1} ,\cdots, \psi  t_{\alpha_{m}} \psi^{-1}).
	\]
	If the monodromy $( t_{\alpha_{1}},  t_{\alpha_{2}} ,\cdots, t_{\alpha_{m}})$ transforms another 
	$m$-tuple $( t_{\alpha'_{1}},  t_{\alpha'_{2}} ,\cdots, t_{\alpha'_{m}})$ by Hurwitz moves and total conjugations, 
	we write 
	\[
	( t_{\alpha_{1}},  t_{\alpha_{2}} ,\cdots, t_{\alpha_{m}}) \equiv ( t_{\alpha'_{1}},  t_{\alpha'_{2}} ,\cdots, t_{\alpha'_{m}}).
	\]
	Note that the two transformations preserve the diffeomorphism types of $X$.
	The reader finds geometric meanings of the two transformations in \cite[Section $2$ and $4$]{Mat}. 
	
	\begin{definition}\label{def: PALF}
	 A Lefschetz fibration is said to be \textit{allowable} if all of the vanishing cycles are homologically non-trivial in the fiber $\Sigma$. 
	 After this, we abbreviate a positive allowable Lefschetz fibration to a \textit{PALF}.
	 \end{definition}
	 		
	Next we will review open book decompositions. 
	Let $M$ be a closed, oriented, connected $3$-manifold.
	\begin{definition}\label{def: open book}                           
		Let $L$ be an oriented link in $M$ and $\pi: (M-L) \rightarrow S^1$ a smooth map. 
		A pair $(L, \pi)$ is called an \textit{open book decomposition} of $M$
		if $\pi: (M-L) \rightarrow S^1$ is a fibration of the complement of $L$ such that 
		$\pi^{-1}(\theta )$ is the interior of a compact surface $\Sigma_{\theta}\subset M$ 
		which is diffeomorphic to a compact surface $\Sigma$ and whose boundary is $L$ for any $\theta \in S^1$.   
	\end{definition}
	
	The oriented link $L$ is called the \textit{binding} and 
	the compact surface $\Sigma$ is called the \textit{page} of the open book decomposition $(L, \pi)$.

	Once we give an open book decomposition $(L,\pi)$ with page $\Sigma$ of $M$, 
	we consider it as a pair of the surface $\Sigma$ and 
	a diffeomorphism $\varphi$ of $\Sigma$ relative to the boundary, called a \textit{monodromy} of the open book decompositions, as follows: 
	$M - L$ is diffeomorphic to the interior of a mapping torus $[0,1]\times \Sigma/ ((1,x) \sim  (0,\varphi(x))$.
	The pair $(\Sigma, \varphi)$ is often said to be an \textit{abstract open book decomposition} of $M$. 
	Conversely for any abstract open book decomposition $(\Sigma, \varphi )$, 
	there exists an open book decomposition $(L, \pi)$ (cf. \cite[Section $2$]{E}).
	After this, we do not distinguish abstract open book decompositions from open book decompositions throughout the remainder of the article.
	
	A contact $3$-manifold $(M, \xi)$ is related to an open book decomposition of $M$ as follows.
	
	\begin{definition}\label{def: support}
		A contact structure $\xi $ on $M$ is \textit{supported} by an open book decomposition $(L, \pi )$ of $M$ 
		if there exists a contact form $\alpha$ for $\xi$ such that 
		\begin{enumerate}
			\item $d\alpha$ is a positive volume form on each page $\Sigma_{\theta}$ of the open book decomposition, and
			\item $\alpha > 0$ on the binding $L$, that is, for any vector field $v$ inducing the positive orientation of $L$, $\alpha (v) > 0$.   
		\end{enumerate}
	\end{definition}

		Loi and Piergallini \cite{LP}, and Akbulut and Ozbagci \cite{AO} showed that 
		every open book decomposition with monodromy 
		which has a positive factorization supports a Stein fillable contact structure, and 
		every Stein fillable contact structure is supported 
		by an open book decomposition with monodromy which has a positive factorization.
		It can be easily check that, for any PALF $f:X \rightarrow D^2$ with fiber $\Sigma$, 
		we obtain the open book decomposition with page $\Sigma$ and monodromy the total monodromy of $f$.
		Conversely it can be shown that, for every open book decomposition $(\Sigma, \varphi)$ whose monodromy $\varphi$ has a positive factorization, 
		we obtain the PALF $f: X \rightarrow D^2$ with fiber $\Sigma$ and 
		monodromy the $m$-tuple determined by the positive factorization of $\varphi$.

	 \section{Main Results.}\label{sec3}
	    In this section, an integral homology $3$-sphere is called a homology $3$-sphere for short.
	    
	 \subsection{Handle decompositions of Stein fillings of homology $3$-spheres}
		Before stating the results in this subsection, 
		we review some conditions for a contact structure on a $3$-manifold to be supported by a planar open book decomposition.
	 
	 \begin{theorem}[Etnyre {\cite[Theorem $4.1$]{E2}}]\label{Etnyre}
	 Suppose that $X$ is a symplectic filling of a contact $3$-manifold $(M,\xi)$ supported 
	 by a planar open book decomposition.
	 Then $b_{2}^{+}(X) = b_{2}^{0}(X) = 0$, and the boundary of $X$ is connected.
	 Moreover, if $M$ is a homology $3$-sphere, then the intersection form $Q_{X}$ of $X$ is 
	 diagonalizable over the integers.
	 \end{theorem}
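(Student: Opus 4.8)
The plan is to bootstrap the statement from Eliashberg's classification of symplectic fillings of $(S^3,\xi_{std})$, using the planar open book to ``cap off'' the filling $X$ to a filling of $(S^3,\xi_{std})$. Write the supporting open book as $(\Sigma_{0,n},\varphi)$, where $\Sigma_{0,n}$ denotes the $n$-holed sphere. For a single binding component there is a \emph{capping-off} operation producing the open book $(\Sigma_{0,n-1},\varphi')$ obtained by gluing a disk to the corresponding boundary circle of the page and extending $\varphi$ by the identity; on the level of $3$-manifolds this is a surgery along the binding component with respect to the page framing, and the key point---which I would need to establish or cite---is that it is realized by a symplectic cobordism from $(M,\xi)$ to the contact manifold $(M',\xi')$ supported by $(\Sigma_{0,n-1},\varphi')$, in the direction that carries a filling of the former to a filling of the latter. (If $X$ is only a weak filling one first adjusts it near $M$; when $M$ is a rational homology sphere, in particular in the case of interest here, this is immediate from Ohta--Ono \cite{OO}.) Iterating the capping-off cobordism $n-1$ times collapses the page to the disk $D^2$; since $\Mod(D^2)$ is trivial, the final open book is $(D^2,\mathrm{id})$, which supports $(S^3,\xi_{std})$. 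This yields a symplectic filling $\widehat X=X\cup C$ of $(S^3,\xi_{std})$, where $C$ is the union of the $n-1$ capping cobordisms, attached along $M=\partial X$.

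I would then invoke Eliashberg's theorem \cite{El}: every symplectic filling of $(S^3,\xi_{std})$ is diffeomorphic to a blow-up $D^4\#\,k\,\overline{\mathbb{CP}}^2$ of the $4$-ball, and $(S^3,\xi_{std})$ has no symplectic semi-filling. Hence $\partial\widehat X=S^3$, which forces $\partial X=M$ to be connected, and $Q_{\widehat X}\cong\langle-1\rangle^{\oplus k}$; in particular $b_2^+(\widehat X)=b_2^0(\widehat X)=0$.

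To transfer these conclusions back to $X$, I would use the Mayer--Vietoris sequence of $\widehat X=X\cup_M C$. Every class of $H_2(X;\R)$ is represented by a closed surface lying in the interior of $X$, so inclusion induces a map $j\colon H_2(X;\R)\to H_2(\widehat X;\R)$ preserving self-intersection and intersection numbers; since $Q_{\widehat X}$ is negative definite, $X$ carries no homology class of positive square, hence $b_2^+(X)=0$. For $b_2^0(X)=0$: when $M$ is a homology $3$-sphere, $H_2(M;\R)=0$, so $j$ is injective and $Q_X$ is a nondegenerate sublattice of the negative-definite form $Q_{\widehat X}$, hence negative definite; for general $M$ this requires a further Mayer--Vietoris and Lefschetz-duality computation, which I do not carry out here. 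Now assume $M$ is an integral homology $3$-sphere, so $H_1(M;\Z)=H_2(M;\Z)=0$. Then Mayer--Vietoris gives an \emph{orthogonal} splitting $Q_{\widehat X}\cong Q_X\oplus Q_C$ into unimodular lattices (surfaces in $X$ and surfaces in $C$ can be isotoped apart), so $Q_X$ is a unimodular orthogonal direct summand of $\langle-1\rangle^{\oplus k}$. By an elementary lattice argument such a summand is diagonal: splitting off vectors of square $-1$ writes $Q_X\cong\langle-1\rangle^{\oplus j}\oplus L_0$ with $L_0$ unimodular, negative definite and containing no vector of square $-1$; since the $2k$ vectors of square $-1$ in $\langle-1\rangle^{\oplus k}$ all lie in $Q_X\oplus Q_C$ and none lies in $L_0$, and a negative-definite unimodular lattice of rank $r$ has at most $2r$ vectors of square $-1$, counting these vectors forces $L_0=0$. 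Hence $Q_X\cong\langle-1\rangle^{\oplus b_2(X)}$, so $Q_X$ is diagonalizable over $\Z$.

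The step I expect to be the real obstacle is the first one: verifying that capping off a binding component is realized by a symplectic cobordism in the stated direction---concretely, exhibiting the symplectic $2$-handle attached along a suitable pushoff of the binding with the page framing and checking that the symplectic form extends across it. Planarity enters exactly here, since it is what makes the iteration terminate at the disk page, i.e.\ at $(S^3,\xi_{std})$. One could instead stop one step earlier, cap the resulting $(S^3,\xi_{std})$ with a neighborhood of a conic in $\mathbb{CP}^2$ so as to embed $X$ in a closed symplectic $4$-manifold containing a symplectic sphere of self-intersection $+1$, and apply McDuff's structure theorem \cite{Mc}; but the route through Eliashberg's theorem is the most economical, and everything after the capping-off construction is then formal---Eliashberg's filling theorem, a Mayer--Vietoris comparison of intersection forms, and the short lattice count above.
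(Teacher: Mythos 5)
First, a point of order: the paper does not prove this statement---it is imported verbatim from Etnyre \cite[Theorem 4.1]{E2}---so there is no internal proof to compare against; I am measuring your proposal against Etnyre's published argument. Your overall architecture (use planarity to embed $X$ in something with known intersection form, then pull the conclusions back through Mayer--Vietoris and a lattice argument) is the right one, and your endgame is essentially correct: the orthogonal splitting $Q_{\widehat X}\cong Q_X\oplus Q_C$ when $M$ is an integral homology sphere, and the count of square~$-1$ vectors showing that a unimodular negative definite orthogonal summand of a diagonal lattice is itself diagonal, are both sound and are what the actual proof uses.

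The genuine gap is exactly the step you flag: the claim that capping off binding components one at a time is realized by a symplectic cobordism carrying a (weak) filling of $(M,\xi)$ to a filling of the capped-off contact manifold. This is not a technical verification you can defer---it is the hard content, and I do not believe it holds in the form you need. Capping off a binding component is a $2$-handle attachment along a transverse knot with the \emph{page framing}, which is precisely the borderline case in Eliashberg's capping construction: the resulting boundary is no longer of contact type but instead fibers over $S^1$ with symplectic fibers, so after one step you no longer have a symplectic filling of a contact manifold and cannot iterate. (Whether weak fillability is even preserved under capping off a single binding component was raised as an open question by Baldwin; your argument silently assumes a strong affirmative answer at every stage.) Etnyre's proof avoids the iteration entirely: he attaches symplectic $2$-handles along \emph{all} binding components at once, observes that planarity makes the capped-off fibers spheres, glues in a symplectic sphere-fibration over $D^2$, and applies McDuff's rational/ruled theorem to the resulting closed symplectic $4$-manifold containing a symplectic sphere of nonnegative square; the homological conclusions then follow as in your last two paragraphs. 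This is essentially the ``conic in $\mathbb{CP}^2$'' alternative you mention in passing, and it is that route, not the iterated capping, that should be your main line. Two smaller deficits: the connectedness of $\del X$ requires the semi-filling version of the capping argument (you do gesture at this), and your proof of $b_{2}^{0}(X)=0$ is only carried out when $M$ is a homology sphere, whereas the theorem asserts it for all $M$; that general case needs the Mayer--Vietoris/duality computation you explicitly decline to do.
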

	 
	 \begin{theorem}[Ozv\'{a}th, Stipsicz and Szab\'{o} {\cite[Corollary $1.5$]{OSS}}]\label{OSS}
	 Suppose that a contact $3$-manifold $(M, \xi)$ with the Euler class $e(\xi) = 0$ admits 
	 a Stein filling $X$ such that the first Chern class $c_{1}(X) \neq 0$.
	 Then $\xi$ is not supported by a planar open book decomposition. 
	 \end{theorem}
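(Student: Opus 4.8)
The plan is to prove the contrapositive: assuming $\xi$ is supported by a planar open book decomposition with page $\Sigma_{0,r}$ and that $X$ is a Stein filling of $(M,\xi)$, I will deduce $c_{1}(X,J)=0$. Three preliminary observations set the stage. Since a Stein filling is minimal, Wendl's Theorem \ref{thm: Wendl} realizes $X$ as the total space of a PALF over $D^{2}$ with fiber $\Sigma_{0,r}$; in particular $X$ has a handle decomposition with one $0$-handle, $r-1$ one-handles and finitely many $2$-handles attached along Legendrian realizations of the vanishing cycles, so $H_{2}(X;\Z)$ is free and $c_{1}(X,J)$ is computed from the rotation numbers of these attaching circles. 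Next, because $c_{1}(X,J)|_{\del X}=e(\xi)=0$, the class $c_{1}(X,J)$ lifts to $H^{2}(X,\del X;\Z)$, so $c_{1}(X,J)^{2}\in\Z$ is well defined; correspondingly the $\mathrm{spin}^{c}$ structure $\mathfrak{s}_{\xi}$ induced on $M$ has torsion first Chern class and the Heegaard Floer correction term $d(M,\mathfrak{s}_{\xi})$ is defined. Finally, Etnyre's Theorem \ref{Etnyre} gives that $Q_{X}$ is negative definite, so $c_{1}(X,J)$ is a characteristic covector of a negative definite form.

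The key point is to prove $c_{1}(X,J)^{2}=0$; granting this, negative definiteness of $Q_{X}$ forces $c_{1}(X,J)$ to be a torsion class, and a short additional argument (automatic when $M$ is a homology sphere, since then $H^{2}(X;\Z)$ is torsion-free, and otherwise obtained from the finer tracking of the contact class below) shows its torsion part also vanishes. To compute the square I would sandwich it. From the negative definite filling $X$, the Ozsv\'{a}th--Szab\'{o} inequality for correction terms yields
\[
 c_{1}(X,J)^{2}+b_{2}(X)\ \le\ 4\,d(M,\mathfrak{s}_{\xi}).
\]
For the reverse bound I would use planarity: the planar open book presents $(M,\xi)$ as Legendrian surgery along a link lying on a single page in the standard contact $(\#^{r-1}(S^{1}\times S^{2}),\xi_{std})$, hence as the convex end of a Weinstein cobordism built from $2$-handles. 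The Heegaard Floer contact class $c(\xi)\in\widehat{HF}(-M)$ is nonzero since $(M,\xi)$ is fillable, and by naturality of the contact class the map induced by this $2$-handle cobordism identifies $c(\xi)$ with the contact class of $(\#^{r-1}(S^{1}\times S^{2}),\xi_{std})$; the latter sits in the torsion $\mathrm{spin}^{c}$ structure and in the top Maslov grading, and $\widehat{HF}$ of $\#^{r-1}(S^{1}\times S^{2})$ is exhausted by that single $\mathrm{spin}^{c}$ structure. Tracking $c(\xi)$ through the cobordism, together with the known correction terms of $\#^{r-1}(S^{1}\times S^{2})$, identifies $d(M,\mathfrak{s}_{\xi})$ with $b_{2}(X)/4$ and forces equality in the displayed inequality, so $c_{1}(X,J)^{2}=0$.

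I expect the main obstacle to be precisely this sharpness statement: not the Ozsv\'{a}th--Szab\'{o} inequality itself, but the fact that for a planar contact structure with $e(\xi)=0$ the correction term $d(M,\mathfrak{s}_{\xi})$ equals $b_{2}(X)/4$ on the nose. This is where one must genuinely exploit both the rigidity of the contact class under Weinstein surgery and the very simple shape of $\widehat{HF}(\#^{r-1}(S^{1}\times S^{2}))$. Some care is also needed here, because an extremal characteristic covector of a general negative definite lattice need not vanish --- for instance $(\pm1,\dots,\pm1)$ for the diagonal form --- so the conclusion $c_{1}(X,J)=0$ really does rest on the exact value of $d(M,\mathfrak{s}_{\xi})$ rather than on negative definiteness alone. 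The remaining points --- the well-definedness of $c_{1}(X,J)^{2}$, the handle structure of $X$ supplied by Wendl's theorem, and the vanishing of the torsion part of $c_{1}(X,J)$ --- are comparatively routine.
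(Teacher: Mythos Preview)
The paper does not prove this statement: Theorem~\ref{OSS} is quoted verbatim as Corollary~1.5 of Ozsv\'ath--Stipsicz--Szab\'o \cite{OSS} and is used only as a black box in the proof of Lemma~\ref{H2}. There is therefore no proof in the present paper to compare your proposal against.

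That said, your sketch is in the same spirit as the original argument in \cite{OSS}, which is indeed Heegaard Floer--theoretic and hinges on the behaviour of the contact invariant $c^{+}(\xi)\in HF^{+}(-M,\mathfrak s_{\xi})$ under the cobordisms coming from a planar open book. A word of caution about your step~5, however: the asserted equality $d(M,\mathfrak s_{\xi})=b_{2}(X)/4$ cannot be correct as written, since the left side is an invariant of $(M,\xi)$ alone while the right side ostensibly depends on the particular filling $X$. What \cite{OSS} actually establishes is that for planar $\xi$ the class $c^{+}(\xi)$ lies in the image of $U^{d}$ for every $d\ge 0$; this pins down its absolute grading and, combined with the grading-shift formula for the Stein cobordism from $S^{3}$ to $M$ obtained by deleting a Darboux ball from $X$, yields the vanishing of $c_{1}(X)$ directly, without passing through a separate computation of the correction term. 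Your cobordism from $\#^{r-1}(S^{1}\times S^{2})$ can be made to work, but you would need to track the grading shift carefully rather than assert the displayed identity. Also note that invoking Wendl's Theorem~\ref{thm: Wendl} here is anachronistic (it postdates \cite{OSS}) and unnecessary: the original argument does not require realising $X$ itself as a PALF.
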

	 
	 Using the above theorems, we will show the following lemma. 
	 
	 \begin{lemma}\label{H2}
	Let $M$ be a homology $3$-sphere.
	Suppose that a contact $3$-manifold $(M, \xi)$ admits a Stein filling $X$ 
	and $\xi$ is supported by a planar open book decomposition.
	Then $H_{2}(X; \Z) =0$.
	 \end{lemma}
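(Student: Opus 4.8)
The plan is to show that $X$ has the homology of $D^4$ in all degrees except possibly degree $2$, where it is free abelian, and then to rule out $b_2(X)>0$ by combining the two structural theorems recalled above. First I would observe that, since $M$ is a homology $3$-sphere, $H^2(M;\Z)=0$, so the Euler class $e(\xi)$ vanishes for trivial reasons. Applying Theorem \ref{OSS} contrapositively, using that $\xi$ is supported by a \emph{planar} open book, then forces $c_1(X)=0$. This is the one genuinely contact-geometric input, and it is precisely where planarity enters: without it one could not exclude negative-definite Stein fillings of a homology sphere with positive $b_2$.

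Next I would exploit that a Stein filling is a very restricted $4$-manifold. By Loi--Piergallini \cite{LP} (see also Akbulut--Ozbagci \cite{AO}) the Stein domain $X$ is diffeomorphic to the total space of a PALF over $D^2$, so by the handle picture recalled in Section \ref{section: LF and OB} it admits a handle decomposition with a single $0$-handle, finitely many $1$-handles and finitely many $2$-handles, and no handles of higher index. Consequently $H_i(X;\Z)=0$ for $i\geq 3$, and $H_2(X;\Z)$, being a subgroup of the free abelian group on the $2$-handles, is free abelian of rank $b_2(X)$. A short diagram chase in the long exact sequence of the pair $(X,M)$, combined with Lefschetz duality and the universal coefficient theorem (and using that $H_3(X;\Z)=0$ and $H_2(X;\Z)$ is free), also yields $H_1(X;\Z)=0$. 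Thus the only homology beyond that of $D^4$ sits in degree $2$ and is torsion-free.

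It then remains to see $b_2(X)=0$. By Theorem \ref{Etnyre}, for our planar-supported filling of a homology sphere the intersection form $Q_X$ is unimodular (automatic for a filling of a homology sphere), negative definite (indeed $b_2^+(X)=b_2^0(X)=0$), and diagonalizable over $\Z$, hence $Q_X\cong\langle -1\rangle^{\oplus b_2(X)}$. On the other hand $c_1(X)$ reduces mod $2$ to $w_2(TX)$, so it is a characteristic element of $Q_X$; since $c_1(X)=0$, the form $Q_X$ must be even. As $\langle -1\rangle^{\oplus n}$ is even only for $n=0$, we get $b_2(X)=0$, and therefore $H_2(X;\Z)=0$.

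I do not anticipate a serious obstacle: once Theorems \ref{Etnyre} and \ref{OSS} are available the argument is essentially bookkeeping. The two points needing a little care are (i) citing the precise statement that a Stein filling carries a PALF structure, since it is the absence of $3$- and $4$-handles that simultaneously kills $H_{\geq 3}(X;\Z)$ and the torsion of $H_2(X;\Z)$, and (ii) the routine verification that $c_1(X)$ is a characteristic vector for $Q_X$, so that $c_1(X)=0$ genuinely forces $Q_X$ to be even.
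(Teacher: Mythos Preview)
Your argument is correct and is essentially the same as the paper's: both use the absence of $3$- and $4$-handles to make $H_2(X;\Z)$ free, then combine Theorem~\ref{Etnyre} (giving $Q_X\cong\langle-1\rangle^{\oplus b_2}$) with the fact that $c_1(X)$ is characteristic and Theorem~\ref{OSS} (together with $e(\xi)=0$) to force $b_2=0$. The only cosmetic differences are that the paper phrases this as a contradiction (assume $b_2>0$, produce $a$ with $a^2=-1$, deduce $c_1(X)\neq 0$) and cites \cite{El2,G} rather than \cite{LP,AO} for the handle structure; your detour through $H_1(X;\Z)=0$ is unnecessary here (it is the paper's separate Lemma~\ref{H1}).
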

	 
	 \begin{proof}
	 It is well-known that the Stein filling $X$ admits a handle decomposition without 
	 $3$- or $4$-handles (see \cite{El2} and \cite{G}).
	 Thus $H_{2}(X;\Z)$ has no torsion element.
	 Suppose that the rank of $H_{2}(X; \Z)$ is positive.
	 According to Theorem \ref{Etnyre}, $Q_{X}$ is negative definite and diagonalizable, 
	 so there exists an element $a$ of $H_{2}(X;\Z)$ such that its square $a^2$ is $-1$.
	 We have 
	 \[
	 c_{1}(X)(a) \equiv  w_{2}(X)(a) \equiv a^2 \equiv 1 \  (\textrm{mod}\ 2).
	 \]
	 Therefore  $c_{1}(X) \neq 0$.
	 Now since $M$ is a homology $3$-sphere and $H^{2}(M ; \Z) = 0$, $e(\xi) = 0$.
	Hence this contradicts our assumption by Theorem \ref{OSS}, and $H_{2}(X; \Z)=0$.
	 \end{proof}
	 
	  \begin{lemma}\label{H1}
	Let $M$ be a homology $3$-sphere.
	Suppose that a contact $3$-manifold $(M, \xi)$ admits a Stein filling $X$.
	Then $H_{1}(X; \Z) =0$.
	 \end{lemma}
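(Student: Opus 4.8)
The plan is to compute $H_{1}(X;\Z)$ by a Poincar\'{e}--Lefschetz duality argument, entirely parallel to the proof of Lemma \ref{H2}, using as the only geometric input the fact -- already invoked there -- that a Stein filling admits a handle decomposition with handles of index $0$, $1$ and $2$ only (Eliashberg; see \cite{El2} and \cite{G}).

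First I would extract from this handle decomposition two purely algebraic consequences. Since $X$ is homotopy equivalent to a $2$-dimensional CW complex, $H_{k}(X;\Z)=0$ for all $k\ge 3$; and $H_{2}(X;\Z)$, being the kernel of a homomorphism out of the free abelian group generated by the $2$-cells, is free abelian. Feeding these into the universal coefficient theorem gives
\[
H^{3}(X;\Z)\;\cong\;\operatorname{Hom}(H_{3}(X;\Z),\Z)\,\oplus\,\operatorname{Ext}(H_{2}(X;\Z),\Z)\;=\;0 .
\]

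Next I would apply Poincar\'{e}--Lefschetz duality to the compact oriented $4$-manifold $X$ with boundary, obtaining $H_{1}(X,\del X;\Z)\cong H^{3}(X;\Z)=0$. Then I would run the long exact sequence of the pair $(X,\del X)$: because $\del X=M$ is a homology $3$-sphere, $H_{1}(M;\Z)=0$, so the segment $H_{1}(\del X;\Z)\to H_{1}(X;\Z)\to H_{1}(X,\del X;\Z)$ reads $0\to H_{1}(X;\Z)\to 0$, which forces $H_{1}(X;\Z)=0$.

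I do not expect a genuine obstacle here. The only point deserving attention is the vanishing of $\operatorname{Ext}(H_{2}(X;\Z),\Z)$, i.e.\ the freeness of $H_{2}(X;\Z)$, which is precisely where the absence of handles of index $\ge 3$ is used; without that hypothesis the duality computation, and hence the conclusion, can fail. One could instead argue more directly on the $2$-complex onto which $X$ deformation retracts, but the duality route is shortest and keeps the exposition uniform with Lemma \ref{H2}.
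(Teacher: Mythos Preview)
Your argument is correct and is essentially the same as the paper's: both use the absence of handles of index $\ge 3$ to get $H^{3}(X;\Z)=0$, then combine Poincar\'e--Lefschetz duality with the long exact sequence of the pair $(X,\del X)$ and the hypothesis $H_{*}(\del X)=H_{*}(S^{3})$. The only cosmetic difference is the order of operations: the paper applies the cohomology exact sequence first to obtain $H^{3}(X,\del X;\Z)=0$ and then duality to reach $H_{1}(X;\Z)$, whereas you dualize first to $H_{1}(X,\del X;\Z)=0$ and then use the homology exact sequence.
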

	 
	 \begin{proof}
	 Note that $H^{3}(X; \Z) =0$ since $X$ admits a handle decomposition without $3$-handles as mentioned above.
	 The exact sequence of cohomology of the pair $(X, \del X)$ provides the following exact sequence 
	 \[
	 0= H^{3}(X, \Z) \leftarrow H^3(X, \del X; \Z) \leftarrow H^{2}(\del X; \Z) =0\, \rm{(exact)}.
	 \]
	 Thus $H_{1}(X; \Z) \cong H^{3}(X, \del X; \Z) =0$.
	\end{proof}
	 
	 From these lemmas, once we give a handle decomposition of a Stein filling of a contact $3$-manifold, 
	we can characterize the number of $1$-handles and that of $2$-handles of the handle decomposition.
	 
	 \begin{proposition}\label{prop: handle}
	 Let $M$ be a homology $3$-sphere.
	Suppose that a contact $3$-manifold $(M, \xi)$ admits a Stein filling $X$ 
	and $\xi$ is supported by a planar open book decomposition.
	Then, for any handle decomposition of $X$ with one $0$-handle and without $3$- or $4$-handles, 
	the number of $1$-handles and that of $2$-handles agree.
	 \end{proposition}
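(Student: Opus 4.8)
The plan is to play off two computations of the Euler characteristic $\chi(X)$ against each other. First I would fix a handle decomposition of $X$ with a single $0$-handle, say $n_{1}$ $1$-handles and $n_{2}$ $2$-handles, and no $3$- or $4$-handles. Since the Euler characteristic is computed from any handle (or CW) decomposition as the alternating sum of the numbers of handles in each index, this gives
\[
\chi(X) = 1 - n_{1} + n_{2}.
\]

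Next I would compute $\chi(X)$ from the homology of $X$. Because $X$ is connected we have $H_{0}(X;\Z) \cong \Z$. Lemma \ref{H1} gives $H_{1}(X;\Z)=0$ and Lemma \ref{H2} gives $H_{2}(X;\Z)=0$. Finally, a handle decomposition without $3$- and $4$-handles has trivial cellular chain groups in degrees $3$ and $4$, so $H_{3}(X;\Z)=H_{4}(X;\Z)=0$. Hence every homology group of $X$ above degree $0$ vanishes and $X$ has the rational homology of a point, so
\[
\chi(X) = \sum_{k\ge 0}(-1)^{k}\operatorname{rank} H_{k}(X;\Z) = 1.
\]

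Comparing the two expressions yields $1 - n_{1} + n_{2} = 1$, that is, $n_{1}=n_{2}$, which is exactly the claim. I do not expect a genuine obstacle here: the content is entirely carried by Lemmas \ref{H1} and \ref{H2}, and the remaining work is the observation that $\chi$ is independent of the chosen handle decomposition together with the vanishing of $H_{3}$ and $H_{4}$ forced by the absence of $3$- and $4$-handles. The only point to state carefully is that the hypothesis "one $0$-handle, no $3$- or $4$-handles" is what makes both computations of $\chi(X)$ available simultaneously.
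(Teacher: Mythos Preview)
Your proposal is correct and follows essentially the same argument as the paper: use Lemmas \ref{H1} and \ref{H2} to conclude $\chi(X)=1$, compute $\chi(X)=1-n_{1}+n_{2}$ from the given handle decomposition, and equate. The paper's version is slightly terser (it does not spell out the vanishing of $H_{3}$ and $H_{4}$), but the content is the same.
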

	 
	 \begin{proof}
	 Using Lemma \ref{H2} and Lemma \ref{H1}, 
	 the Euler characteristic $\chi (X)$ of $X$ is $1$.
	 On handle homology, each $i$-th chain complex $C_{i}(X)$ is generated by the $i$-handles, 
	 thus $\displaystyle C_{1}(X) \cong \Z^m$ and $\displaystyle C_{2}(X) \cong \Z^n$, 
	where $m$ is the number of $1$-handles and $n$ is that of $2$-handles of the given handle decomposition of $X$.
	Therefore we have 
	\[
	1 = \chi (X) = 1-m +n , 
	\]
	so $m=n$.
	We complete the proof of this proposition.
	 \end{proof}
	 
	 We can understand this proposition  in terms of PALFs.
	 
	\begin{proposition}\label{prop: PALF}
	Let $M$ be a homology $3$-sphere.
	Suppose that a contact $3$-manifold $(M, \xi)$ admits a Stein filling $X$ 
	and $\xi$ is supported by a planar open book decomposition.
	If $X$ admits a PALF $f:X\rightarrow D^2$ with fiber a $(n+1)$-holed sphere $\Sigma_{0,n+1}$, 
	then the number of singular fibers of $f$ is $n$.
	\end{proposition}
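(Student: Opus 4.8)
The plan is to read a handle decomposition of $X$ directly off the PALF and then feed it into Proposition \ref{prop: handle}. Recall from Section \ref{section: LF and OB} that a positive Lefschetz fibration $f : X \rightarrow D^2$ with regular fiber a compact, oriented, connected genus $g$ surface with $r$ boundary components induces a handle decomposition
\[
X = (D^2 \times \Sigma) \cup \Big( \bigcup_{i=1}^{m} h^{(2)}_{i} \Big)
  = \Big( h^{(0)} \cup \big( \bigcup_{j=1}^{2g+r-1} h^{(1)}_{j} \big) \Big) \cup \Big( \bigcup_{i=1}^{m} h^{(2)}_{i} \Big),
\]
where $m$ is the number of singular fibers of $f$. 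In the situation at hand the fiber is $\Sigma_{0,n+1}$, so $g=0$ and $r=n+1$, hence $2g+r-1 = n$. Thus this particular handle decomposition of $X$ has exactly one $0$-handle, exactly $n$ $1$-handles, $m$ $2$-handles, and no $3$- or $4$-handles.

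Next I would observe that the hypotheses of Proposition \ref{prop: handle} are in force: $M$ is a homology $3$-sphere, $(M,\xi)$ admits the Stein filling $X$, and $\xi$ is supported by a planar open book decomposition. The handle decomposition produced above has one $0$-handle and no $3$- or $4$-handles, so it is exactly of the type to which Proposition \ref{prop: handle} applies. That proposition then gives that the number of $1$-handles equals the number of $2$-handles, i.e. $n = m$. This is the desired conclusion.

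I do not expect any genuine obstacle here; the statement is essentially a dictionary translation of Proposition \ref{prop: handle} into the language of Lefschetz fibrations. The only point requiring any care is verifying that the handle decomposition coming from a PALF over $D^2$ really does have the shape demanded by Proposition \ref{prop: handle} (a single $0$-handle and no high-index handles), but this is immediate from the standard description recalled in Section \ref{section: LF and OB}, so the argument is short.
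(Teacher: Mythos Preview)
Your proposal is correct and follows essentially the same approach as the paper: read off the handle decomposition of $X$ from the PALF (one $0$-handle, $n$ $1$-handles, $m$ $2$-handles, and no higher-index handles) and then invoke Proposition~\ref{prop: handle} to conclude $n=m$. The paper's proof is slightly terser but identical in content.
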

	
	\begin{proof}
	We can take a handle decomposition of $X$ from the data of the vanishing cycles of the PALF $f$ as in Section \ref{section: LF and OB}.
	On the handle decomposition, the number of $1$-handles is the first Betti number of $\Sigma_{0,n+1}$ i.e., $n$, 
	and that of $2$-handles is the number of the vanishing cycles.
	Thus this proposition follows from Proposition \ref{prop: handle}. 
	\end{proof}
	
	\begin{remark}
	Proposition \ref{prop: handle} and \ref{prop: PALF} are used to examine whether a Stein fillable contact structure $\xi$ 
	on a homology $3$-sphere is supported by a planar open book decomposition.
	Although these propositions are contained by Theorem \ref{Etnyre} essentially, 
	it is easier to apply our propositions to some contact $3$-manifolds.
	For example let $(M, \xi)$ be a contact homology $3$-sphere obtained from a Legendrian surgery 
	on a Legendrian link in $(S^3, \xi_{std})$.
	Let $X$ be the Stein filling of $(M, \xi)$ defined by the Legendrian surgery diagram.
	Obviously, $X$ admits a handle decomposition consisting of one $0$-handle and $2$-handles.
	Therefore we apply Proposition \ref{prop: handle} to $X$ and conclude that $\xi$ is not supported by any planar open book decomposition.
	\end{remark}
	
	\begin{proposition}
	Let $M$ be a $3$-manifold equipped with an open book decomposition $(\Sigma_{0,n+1}, \varphi)$ and 
	$\xi$ a Stein fillable contact structure on $M$ supported by $(\Sigma_{0, n+1}, \varphi)$.
	Suppose that a Stein filling $X$ of $(M, \xi)$ admits 
	a PALF $f:X\rightarrow D^2$ determined by a positive factorization of $\varphi$.
	Then $M$ is a homology $3$-sphere if and only if $H_{1}(X; \Z) =0$ and the number of singular fibers of $f$ is $n$.
	\end{proposition}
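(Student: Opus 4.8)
The plan is to read everything off the handle decomposition that the PALF $f\colon X\to D^2$ supplies. As recalled in Section~\ref{section: LF and OB}, this decomposition has the form
\[
X=(D^2\times\Sigma_{0,n+1})\cup\bigl(h^{(2)}_1\cup\cdots\cup h^{(2)}_m\bigr)
=\bigl(h^{(0)}\cup h^{(1)}_1\cup\cdots\cup h^{(1)}_n\bigr)\cup\bigl(h^{(2)}_1\cup\cdots\cup h^{(2)}_m\bigr),
\]
with one $0$-handle, $n$ one-handles (since $2g+r-1=n$ for $\Sigma_{0,n+1}$), $m$ two-handles, and no $3$- or $4$-handles, where $m$ is the number of singular fibers of $f$. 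I would first record that $(\Sigma_{0,n+1},\varphi)$ is a planar open book, so that Lemma~\ref{H1} and Proposition~\ref{prop: PALF} apply verbatim to $X$.

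For the forward implication I would do essentially nothing new: if $M$ is a homology $3$-sphere, then $H_1(X;\Z)=0$ is exactly Lemma~\ref{H1}, and the number of singular fibers of $f$ equals $n$ is exactly Proposition~\ref{prop: PALF}.

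For the converse I would argue from the handle chain complex. Since there is a single $0$-handle, $\del_1=0$, so under the hypothesis $m=n$ the complex reads $0\to\Z^{n}\xrightarrow{\del_2}\Z^{n}\xrightarrow{0}\Z\to0$. The hypothesis $H_1(X;\Z)=\operatorname{coker}\del_2=0$ forces $\del_2$ to be a surjective endomorphism of $\Z^n$, hence an isomorphism, so $H_2(X;\Z)=\ker\del_2=0$; combined with $H_3(X;\Z)=H_4(X;\Z)=0$ this shows $X$ has the integral homology of a point. I would then invoke Lefschetz duality $H_k(X,\del X;\Z)\cong H^{4-k}(X;\Z)$, which vanishes for $k\le3$ by the universal coefficient theorem, and feed this into the long exact sequence of the pair $(X,\del X)$ to obtain
\[
0=H_2(X,\del X;\Z)\longrightarrow H_1(\del X;\Z)\longrightarrow H_1(X;\Z)=0 ,
\]
i.e.\ $H_1(M;\Z)=0$. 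Since $M=\del X$ is closed, oriented, and connected, Poincar\'e duality and the universal coefficient theorem then give $H_*(M;\Z)\cong H_*(S^3;\Z)$, so $M$ is a homology $3$-sphere.

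I do not expect a genuine obstacle: the statement is mostly a repackaging of Lemma~\ref{H1} and Proposition~\ref{prop: PALF}. The one point needing care is that in the converse direction one cannot reuse Theorem~\ref{Etnyre} or Theorem~\ref{OSS} to kill $H_2(X;\Z)$, since those results presuppose that $M$ is a homology sphere, which is precisely the conclusion; thus $H_2(X;\Z)=0$ must be obtained purely algebraically from $H_1(X;\Z)=0$ and $m=n$, as above, before passing to $\del X$ by duality.
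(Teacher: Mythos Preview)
Your argument is correct. The forward direction matches the paper verbatim (citing Lemma~\ref{H1} and Proposition~\ref{prop: PALF}). For the converse you take a different, somewhat more direct route than the paper.

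The paper's converse argument performs surgery on the cores of the $1$-handles of $X$ to obtain a simply connected $4$-manifold $X'$ with $\partial X'=\partial X=M$, and then computes the intersection form of $X'$ as the block matrix $\begin{pmatrix}O&A\\A^{T}&B\end{pmatrix}$, where $A$ is the matrix of $\partial_2\colon C_2(X)\to C_1(X)$; since $H_1(X;\Z)=0$ and $m=n$ force $A$ to be unimodular, $Q_{X'}$ is unimodular and hence $M$ is a homology sphere. You instead stay with $X$: from $m=n$ and $H_1(X;\Z)=0$ you deduce that $\partial_2$ is an isomorphism, hence $H_2(X;\Z)=0$, and then pass to $\partial X$ via Lefschetz duality and the long exact sequence of the pair. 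Both arguments pivot on the same algebraic fact (unimodularity of $\partial_2$), but yours avoids the surgery construction and the intersection-form bookkeeping, while the paper's approach is closer in spirit to Kirby-diagram reasoning and makes the linking-matrix interpretation explicit. One small remark: Lemma~\ref{H1} does not actually require planarity (only Proposition~\ref{prop: PALF} does), so your preliminary comment could be trimmed accordingly.
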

	
	\begin{proof}
	Obviously, the necessary condition follows from Lemma \ref{H1} and Proposition \ref{prop: PALF}.
	Hence we concentrate on the sufficient condition.
	Taking a handle decomposition of $X$ from the data of the vanishing cycles of the PALF $f$ as 
	\[
	X  = (D^2 \times \Sigma_{0,n+1}) \cup (\cup_{i=1}^{m}h^{(2)}_{i})
		    =   (h^{(0)} \cup (\cup_{j=1}^{n} h^{(1)}_{j})) \cup (\cup_{i=1}^{n}h^{(2)}_{i})  
	\]
	and performing surgery on the core of the $1$-handles of $X$, we get the surgered manifold $X'$.
	Let $h_{1}^{\prime (2)}, h_{2}^{\prime (2)}, \dots, h_{n}^{\prime (2)}$ be the $2$-handles provided by the surgery.
	Let $\{h_{1}^{\prime (2)}, h_{2}^{\prime (2)}, \dots, h_{n}^{\prime (2)}, h_{1}^{(2)}, h_{2}^{(2)}, \dots , h_{n}^{(2)} \}$ be a basis for $C_{2}(X')$ and 
	for $i=1,2$, $\{ h_{1}^{(i)}, h_{2}^{(i)}, \dots, h_{n}^{(i)} \}$ one for $C_{i}(X)$.
	Under these bases, the intersection form $Q_{X'}$ can be written as 
	\[
	\begin{pmatrix}
	O & A \\
	A^T & B 
	\end{pmatrix}
	\]
	where $A$ is the matrix representation of the boundary map $\del_{2}: C_{2}(X) \rightarrow C_{1}(X)$ and 
	$B$ is the linking matrix of the attaching circles of $h_{1}^{(2)}, h_{2}^{(2)}, \dots, h_{n}^{(2)}$.
	Since $H_{1}(X; \Z)=0$, $A$ is unimodular and so is $Q_{X}$.
	Therefore $\del X = M $ is a homology $3$-sphere.
	\end{proof}
	
	\subsection{Proofs of the main results}
	 
	 We define types of simple closed curves in $\Sigma_{0,n}$.
	 A simple closed curve in $\Sigma_{0,n}$ is called a \textit{boundary curve} 
	if it is parallel to a boundary component of $\Sigma_{0,n}$ and \textit{non-boundary curve} otherwise.
	 
	 \begin{proof}[Proof of Theorem \ref{thm: uniqueness}]
	 Let $X$ be a Stein filling of $(M, \xi)$.
	 According to Theorem \ref{thm: Wendl} and Proposition \ref{prop: PALF}, $X$ admits a PALF $f: X\rightarrow D^2$ with fiber $\Sigma_{0,4}$ 
	determined by a positive factorization of $\varphi = t_{\gamma} t_{\beta} t_{\alpha}$, 
	where $\alpha$, $\beta$, $\gamma$ are simple closed curves in $\Sigma_{0,4}$.
	If these curves are of non-boundary, then $H_{1}(X; \Z)$ has a torsion element, 
	and $M$ is not a homology $3$-sphere by Proposition \ref{H1}.
	Thus at least one of these curves must be a boundary curve.
	Since a Dehn twist along a boundary curve is an element of the center of $\rm{Mod}(\Sigma_{0,4})$, 
	it suffices to consider the following three cases: 
	\begin{enumerate}
	\item $\alpha$, $\beta$, and $\gamma$ are of boundary, 
	\item $\alpha$ and $\beta$ are of boundary, and $\gamma$ is of non-boundary, and 
	\item $\alpha$ is of boundary, and $\beta$ and $\gamma$ are of non-boundary.
	\end{enumerate}
	
	In the first two cases, the open book decomposition $(\Sigma_{0,4}, \varphi)$ supports the standard contact structure $\xi_{std}$ on $S^3$.
	$(S^3, \xi_{std})$ admits a unique Stein filling, $D^4$, and this theorem folds.
	Thus we concentrate on the case $(3)$ in the rest of the proof.
		
	Considering the conjugate of $\varphi= t_{\gamma} t_{\beta} t_{\alpha}$ by an appropriate mapping class if necessary, 
	we can assume that $\alpha$ and $\beta$ lie in $\Sigma_{0,4}$ as Figure \ref{fig: SCC} 
	and $\gamma$ is a simple closed curve obtained from the dotted curve in Figure \ref{fig: SCC} twisted by a diffeomorphism of $\Sigma_{0,4}$.

	We claim that $X$ is diffeomorphic to either $D^4$ or a Mazur type manifold. 
	A Kirby diagram of $X$ is drawn as Figure \ref{fig: diagram} from the data of vanishing cycles of $f$.
	We can find two cancelling $1$-handle/$2$-handle pairs in the diagram 
	and get a diagram consisting of one $0$-handle, one $1$-handle, and one $2$-handle.
	Since $\pi_{1}(X)$ has one relator and $H_{1}(X; \Z)=0$, $X$ is simply connected.
	Furthermore, since $H_{*}(X; \Z)=0$ if $*>0$, $X$ is contractible.
	Therefore $X$ is diffeomorphic to $D^4$ if $M$ is diffeomorphic to $S^3$ or a Mazur type manifold otherwise.

	Consider another factorization $t_{\gamma'} t_{\beta'} t_{\alpha'}$ of $\varphi$, and 
	let $f':X'\rightarrow D^2$ be a PALF obtained from this factorization.
	We will show that $(t_{\alpha}, t_{\beta}, t_{\gamma}) \equiv (t_{\alpha'}, t_{\beta'}, t_{\gamma'})$.
	Since $\Mod (\Sigma_{0,4})$ is isomorphic to $( \Pi_{i=1}^4 \Z[t_{\delta_{i}}]) \times (\Z [t_{a}] \times \Z[t_{b}])$ 
	where these curves lie in $\Sigma_{0,4}$ as Figure \ref{fig: 4-holed}, 
	the abelianization of $\varphi$ is 
	\[
	[t_{\delta_{1}}]+[t_{a}]+[t_{b}] \in \Mod (\Sigma_{0,4})^{ab} \cong ( \oplus_{i=1}^4 \Z[t_{\delta_{i}}]) \oplus (\Z [t_{a}] \oplus \Z[t_{b}]).
	\]
	Thus one of $t_{\alpha'}$, $t_{\beta'}$ and $t_{\gamma'}$ is $t_{\alpha}$, and 
	the rest of the mapping classes are the conjugate of $t_{\alpha}$ and $t_{\beta}$.
	Using Hurwitz moves, we can assume that $t_{\alpha}=t_{\alpha'}$.
	Moreover we can also assume that $t_{\beta}$ and $t_{\beta'}$ are conjugate and so are $t_{\gamma}$ and $t_{\gamma'}$.
	Since each Dehn twist $t_{\delta_{i}}$ is an element of the center of $\Mod (\Sigma_{0,4})$, 
	there exist mapping classes $\psi _{1}$ and $\psi_{2}$ in $\langle  t_{\gamma}, t_{\beta}\rangle < \Mod(\Sigma_{0,4})$ such that 
	$t_{\beta'} = \psi_{1} t_{\beta} \psi_{1}^{-1}$ and $t_{\gamma'} = \psi_{2} t_{\gamma} \psi_{2}^{-1}$.
		\begin{figure}[tbh]
		\begin{center}
			\begin{tabular}{c}
				\begin{minipage}{0.5\hsize}
					\begin{center}
						\includegraphics[width=150pt]{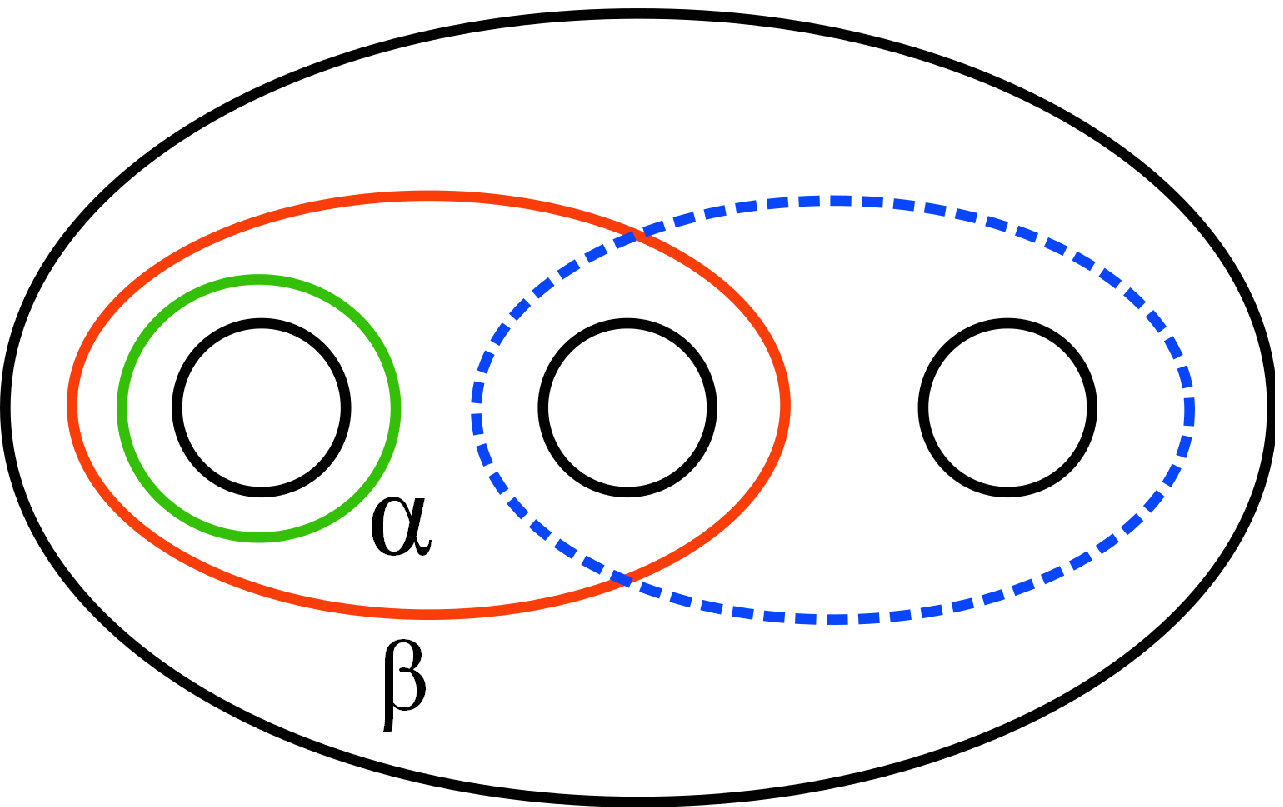}
					\end{center}
						\caption{}
						\label{fig: SCC}
				\end{minipage}
				\begin{minipage}{0.5\hsize}
					\begin{center}
						\includegraphics[width=150pt]{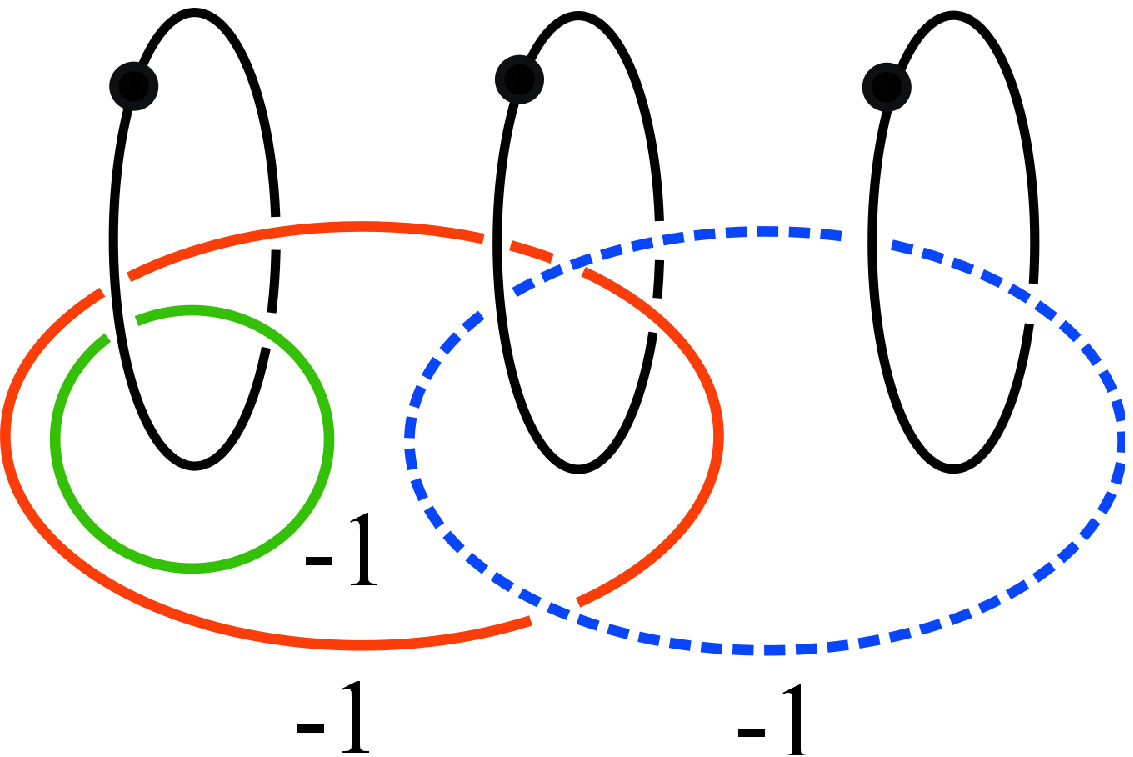}
					\end{center}
						\caption{}
						\label{fig: diagram}
				\end{minipage}
			\end{tabular}
		\end{center}
	\end{figure}
			\begin{figure}[thb]
			\begin{center}
				\includegraphics[width=150pt]{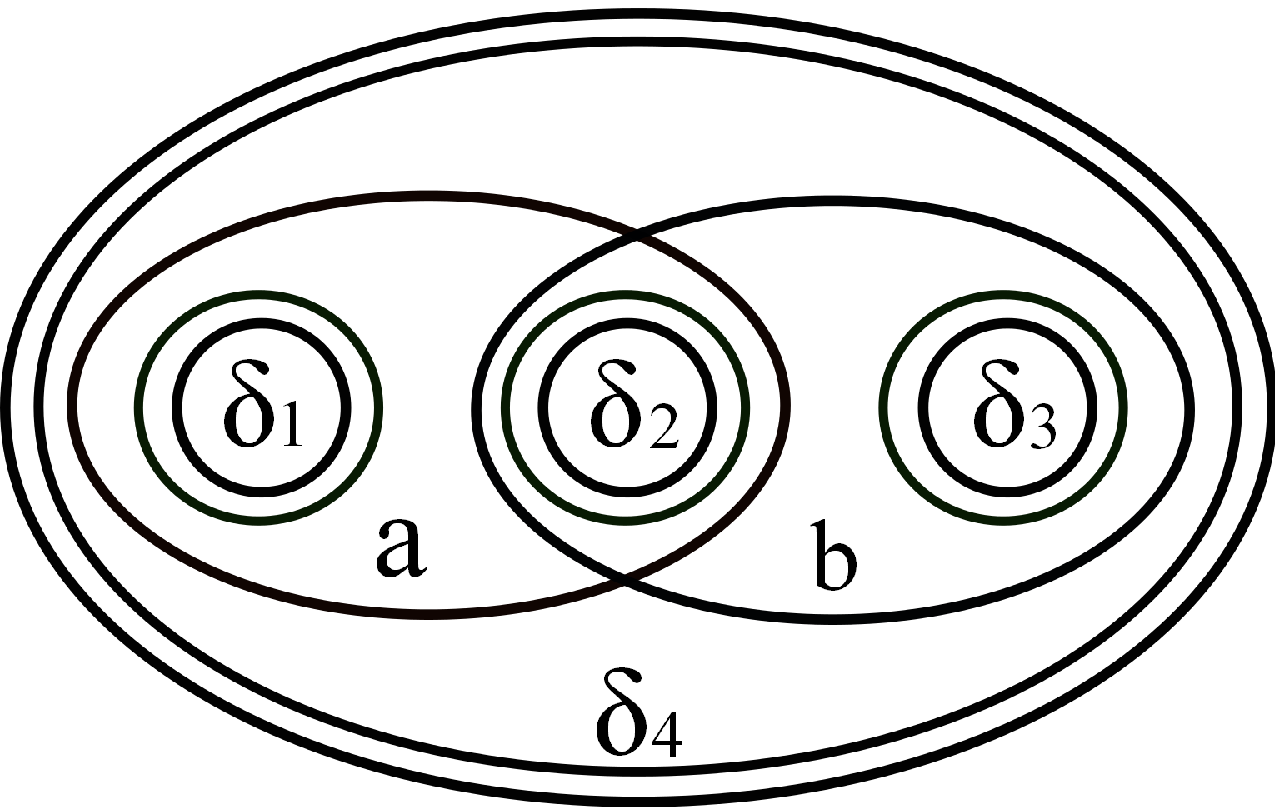}	
				\caption{}
				\label{fig: 4-holed}
			\end{center}
		\end{figure} 
	Hence we have 
	\begin{eqnarray}
	(t_{\alpha'}, t_{\beta'}, t_{\gamma'})\nonumber 	  &\equiv & (t_{\alpha}, \psi_{1} t_{\beta} \psi_{1}^{-1}, \psi_{2} t_{\gamma} \psi_{2}^{-1}) \\
	\nonumber										 &\equiv & (t_{\alpha}, t_{\beta}, \psi_{1}^{-1}\psi_{2} t_{\gamma} \psi_{2}^{-1} \psi_{1}) \\\nonumber
	\nonumber										 &\equiv & (t_{\alpha}, t_{\beta}, t_{\psi_{1}^{-1}\psi_{2}(\gamma)}).
	\end{eqnarray}
	Thus it suffices to prove that  $(t_{\beta}, t_{\gamma}) \equiv (t_{\beta}, t_{\psi_{1}^{-1}\psi_{2}(\gamma)})$.
	To prove this claim, 
	consider the representation $\varrho: \langle t_{\gamma}, t_{\beta} \rangle \rightarrow {\rm{PSL}}(2;\Z)$ defined in Section \ref{MCG}.
	It is easy to check that $i(\gamma, \beta) \geq 2$ and the pair $\{ \gamma, \beta \}$ fills $\Sigma_{0,4}$. 
	Therefore the subgroup $\langle t_{\gamma}, t_{\beta} \rangle$ is isomorphic to $F_2$ and $\varrho$ is injective.
	Let $z := i(\gamma, \beta)$ and 
	$
	\begin{pmatrix}
	a &b \\
	c &d 
	\end{pmatrix}
	:=
	\varrho (\psi_{1}^{-1}\psi_{2})$.
	Then we have
	\[
	\varrho( t_{\gamma}t_{\beta}) =
	 \begin{pmatrix}
	1-z^2 & -z \\
	z & 1 
	\end{pmatrix},
	\]
	\[
	\varrho ( t_{\psi_{1}^{-1}\psi_{2}(\gamma )}t_{\beta} )= \varrho ((\psi_{1}^{-1}\psi_{2}) t_{\gamma}(\psi_{1}^{-1}\psi_{2})^{-1}t_{\beta} )=
	\begin{pmatrix}
	1-acz-a^2z  & -a^2z \\
	(1+c^2-ac)z & 2-a^2z
	\end{pmatrix},
	\]
	and their traces are 
	\[
	{\rm{tr}}(\varrho ( t_{\gamma}t_{\beta}))=2-z^2,\  {\rm{tr}}(\varrho (\psi_{1}^{-1}\psi_{2}) t_{\gamma}(\psi_{1}^{-1}\psi_{2})^{-1}t_{\beta}))=2-a^2z^2.
	\]
	Since $ t_{\gamma}t_{\beta}$ and $t_{\psi_{1}^{-1}\psi_{2}(\gamma)}t_{\beta}$ are conjugate, 
	these traces must agree, and $a=\pm 1$.
	$\varrho (\psi_{1}^{-1}\psi_{2})$ is an element of the subgroup $\langle \varrho (t_{\gamma}), \varrho (t_{\beta}) \rangle$ of ${\rm{PSL}}(2;\Z)$, 
	hence there exist integers $k$ and $\ell$ such that $b=kz$ and $c=\ell z$.
	Therefore, 
	\begin{eqnarray}
	\varrho (\psi_{1}^{-1}\psi_{2}) \nonumber & = & 
		\begin{pmatrix}
		\pm 1 & mz \\
		nz & \pm (1+mnz^2) 
		\end{pmatrix}
	\nonumber \\
	\nonumber 
	& = & 
	\begin{pmatrix}
	1 & 0 \\
	\pm nz & 1 
	\end{pmatrix}
	\begin{pmatrix}
	1 & \pm mz \\
	0 & 1 
	\end{pmatrix}
	\nonumber \\
	\nonumber
	& = & 
	\varrho (t_{\beta}^{\pm n}) \varrho(t_{\gamma}^{\mp m})
	\nonumber \\
	\nonumber 
	& = & 
	\varrho (t_{\beta}^{\pm n} t_{\gamma}^{\mp m}). 
	\end{eqnarray}
	Since $\varrho$ is injective, $\psi_{1}^{-1} \psi_{2} = t_{\beta}^{\pm n} t_{\gamma}^{\mp m}$.
	Thus we have 
	\begin{eqnarray}
	(t_{\beta}, t_{\psi_{1}^{-1}\psi_{2}(\gamma)}) \nonumber & \equiv & ( t_{\beta}, t_{ t_{\beta}^{\pm n} t_{\gamma}^{\mp m}(\gamma)}) \nonumber \\
	\nonumber & \equiv & (t_{\beta}, t_{ t_{\beta}^{\pm n}(\gamma)}) \nonumber \\
	\nonumber & \equiv & 
	(t_{\beta}^{\mp m}t_{\beta} t_{\beta}^{\pm m}, t_{\beta}^{\mp m}t_{ t_{\beta}^{\pm m}(\gamma )}t_{\beta}^{\pm}) \nonumber \\
	\nonumber & \equiv & (t_{\beta}, t_{\gamma}),
	\end{eqnarray}
	in particular 
	\[
	(t_{\alpha}, t_{\beta}, t_{\gamma}) \equiv (t_{\alpha}, t_{\beta}, t_{\psi_{1}^{-1} \psi_{2} (\gamma)}) \equiv (t_{\alpha'}, t_{\beta'}, t_{\gamma'}).
	\]
	We conclude that the monodromies obtained from positive factorizations of $\varphi$ agree 
	up to Hurwitz move and total conjugation, and $X$ and $X'$ are mutually diffeomorphic.
	Therefore $(M, \xi)$ admits also unique Stein filling up to diffeomorphism.
	 \end{proof}
	 
	\begin{proof}[Proof of Corollary \ref{cor}]
	This corollary follows from \cite[Teorem $1.1$]{OO} immediately. 
	\end{proof}

\bibliographystyle{amsplain}

\end{document}